\algnewcommand\algorithmicinput{\textbf{Input:}}
\algnewcommand\Input{\item[\algorithmicinput]}
\algnewcommand\algorithmicoutput{\textbf{Output:}}
\algnewcommand\Output{\item[\algorithmicoutput]}
\title{\LARGE \bf
Optimal Target Intercept Paths for Vehicles with Turn Radius Constraints
}
\author{Satyanarayana Gupta Manyam$^{1}$ \and David Casbeer$^{2}$ %
\and  Alexander Von Moll$^{3}$ \and Zachariah Fuchs$^{4}$
\thanks{$^{1}$Infoscitex Corp.,
        4027 Col Glenn Hwy, Dayton, OH, 45431, USA
        {\tt\small msngupta@gmail.com}}%
\thanks{$^{2}$Control Center of Excellence, Air Force Research Laboratories, 
        WPAFB, OH 45431, USA
        {\tt\small david.casbeer@us.af.mil}}%
\thanks{$^{3}$Control Center of Excellence, Air Force Research Laboratories, 
        WPAFB, OH 45431, USA
        {\tt\small alexander.von\_moll@us.af.mil}}%
\thanks{$^{4}$Department of Electrical Engineering, University of Cincinnati, Cincinnati, OH, USA
        {\tt\small fuchsze@ucmail.uc.edu}}%
}
\newtheorem{prop}{Proposition}
\newtheorem{theorem}{Theorem}
\newtheorem{lemma}{Lemma}
\begin{document}
\maketitle
\thispagestyle{empty}
\pagestyle{empty}

\begin{abstract}
We present a path planning problem for a pursuer to intercept a target traveling on a circle. The pursuer considered here has limited yaw rate, and therefore its path should satisfy the kinematic constraints. We assume that the distance between initial position of the pursuer and any point on the target circle is greater than four times the minimum turn radius of the pursuer. We prove the continuity of the Dubins paths of type Circle-Straight line-Circle with respect to the position on the target circle. This is used to prove that the optimal interception path is a Dubins path, and we present an iterative algorithm to find the optimal interception point on the target circle.

\end{abstract}
\section{INTRODUCTION} \label{sec:intro}

The use of autonomous air vehicles is ever growing in military and civil domains, and consequently, there is an increasing need for path planning. We consider an application where an autonomous air vehicle is used to pursue and intercept a target. In patrolling or surveillance scenarios involving unmanned air-vehicles, the air-vehicle may need to pursue a target, and here, we consider a scenario where the target is traveling on a circle at constant speed. In the context of pursuit-evasion, the pursuer will most likely not know the evader's future path (We refer to evader as target henceforth). Hence, the pursuer could estimate the path of the target and use a strategy to intercept the target along the target's expected path. It is reasonable to assume that the extrapolated path of the target could be a circular arc, and the pursuer must determine the optimal path to intercept at this instance. We focus on this problem of path planning for an air vehicle that needs to intercept a target traveling on a circle. 

An interception is considered to occur when the pursuer reaches some distance behind the target's tail, and its heading is aligned with that of the target. As such, we define an imaginary point, referred to as rabbit, which is located on the target circle behind target. The objective of the pursuer is to reach the rabbit, with its heading tangential to the circle. Without loss of generality, we interchangeably use rabbit/target position as the goal location for the pursuer, since the path planning methodology to arrive at these points would be the same. Also we assume that the pursuer and the target are at same altitude, and we solve the path planning in a $2$D-plane. This is reasonable for planning purpose, as a pursuer could always adjust its altitude while implementing the pursuing strategy. 

The air vehicles considered are fixed wing aircraft and therefore have yaw-rate constraints. We use the Dubins \cite{Dubins1957, bui1994shortest} model for the vehicle, which generates the paths satisfying the turn radius constraints. Here, we look at the properties of the Dubins paths starting from a given position and heading, and that ends on the target circle with the final heading being tangent to the circle. We also assume that the distance between the starting location of the pursuer and any point on the target circle is at least four times the minimum turning radius of the pursuer. Therefore one needs to consider only the Dubins paths of type Circular Arc-Straight Line-Circular Arc (CSC). The characterization of the CSC Dubins paths is done in a companion paper \cite{Manyam2018Shortest}, which we use here to prove the properties of the optimal intercepting path.

\subsection{Literature}
Dubins paths are typically used to find shortest paths satisfying minimum turn radius constraints \cite{bui1994accessibility, bui1994shortest, yang2002optimal, wong2004uav, manyam2017tightly, manyam2018tightly}. Existing results address path planning with turn radius constraints when an initial and final positions and headings are given \cite{bui1994shortest}, or an interval of headings are given \cite{manyam2017tightly, manyam2018tightly}. Some other generalizations include finding a path that passes through a given third point \cite{yang2002optimal, wong2004uav, sadeghi2016dubins}, and path planning in the presence of wind \cite{mcgee2005optimal,techy2009minimum}, or obstacles \cite{boissonnat1996polynomial, macharet2009generation, agarwal1995motion, maini2016path}. However, the path planning problem when the final position is not stationary has never been addressed, and needs attention due to its applications such as rendezvous planning, target interception problem, etc. The existing results in the literature that are related to target intercept problem presented here are trajectory planning for coordinated rendezvous in \cite{mclain2000traj, Mclain2001Coop,Beard2002Coord}, where the paths for multiple UAVs are planned to arrive at different stationary targets simultaneously. The approach is to start with a feasible path, break this path into chain links, and add or remove chain links to find paths of simultaneous arrival. The problem we present in this paper involves finding an optimal path in the presence of moving target, and the time of travel for the pursuer and the target needs to be equal, and this makes it different from the existing results. 

This paper is organized as the following: We present the description of the problem, and briefly state the solution approach in the Section \ref{sec:prb}. The properties of the Dubins paths are presented, and their continuity is proved in the Section \ref{sec:dubpaths}. The algorithm to find the optimal interception point on target circle is presented in the Section \ref{sec:optpath}, and  conclusions are made in the Section \ref{sec:concl}.
\section{Problem Description} \label{sec:prb}

Without loss of generality, we assume that the initial position of the pursuer is $(0,0)$ and the heading is $0$ degrees with respect to the $x-$axis. We know the center and radius of the target circle, and the initial position of the target on that circle. Let $C_t = (c_x,c_y)$ be the center of the target circle, and $r_t$ be the radius of the target circle. We refer to a point on the target circle by its angular position $\alpha$, which is the angle measured counter-clockwise from the $x$-axis (see Fig. \ref{fig:probdesc}). Therefore, $\alpha=0$ represents a point on the circle, such that a line connecting $C_t$ and this point is parallel with the $x$-axis. We denote the initial position of the target on the circle by $\alpha_i$. Let $\rho$ be the minimum turn radius of the pursuer, and $v_p$ and $v_t$ be the constant speeds of the pursuer and the target respectively. 

With these definitions, we now state the problem to be solved herein. Given the initial position ($p_i$) and heading ($\theta_i$) of the pursuer, initial position of the target ($\alpha_i$), their speeds ($v_p$, $v_t$), the center ($C_t$) and radius ($r_t$) of the target circle, and the minimum turn radius of the pursuer ($\rho$), find a path that starts from the initial position of the pursuer, and ends on the target circle at a point (referred to as interception point) with heading tangential to the circle, such that
\begin{itemize}
\item the curvature of the path satisfies the minimum turn radius $\rho$ of the pursuer,
\item the pursuer and target travel times are equal, and
\item the path is of minimum length.
\end{itemize}
We refer to this problem as the Intercepting Target on a Circle Problem (ITOCP).

\begin{figure}[htpb]
\centering
\includegraphics[width=0.9\columnwidth]{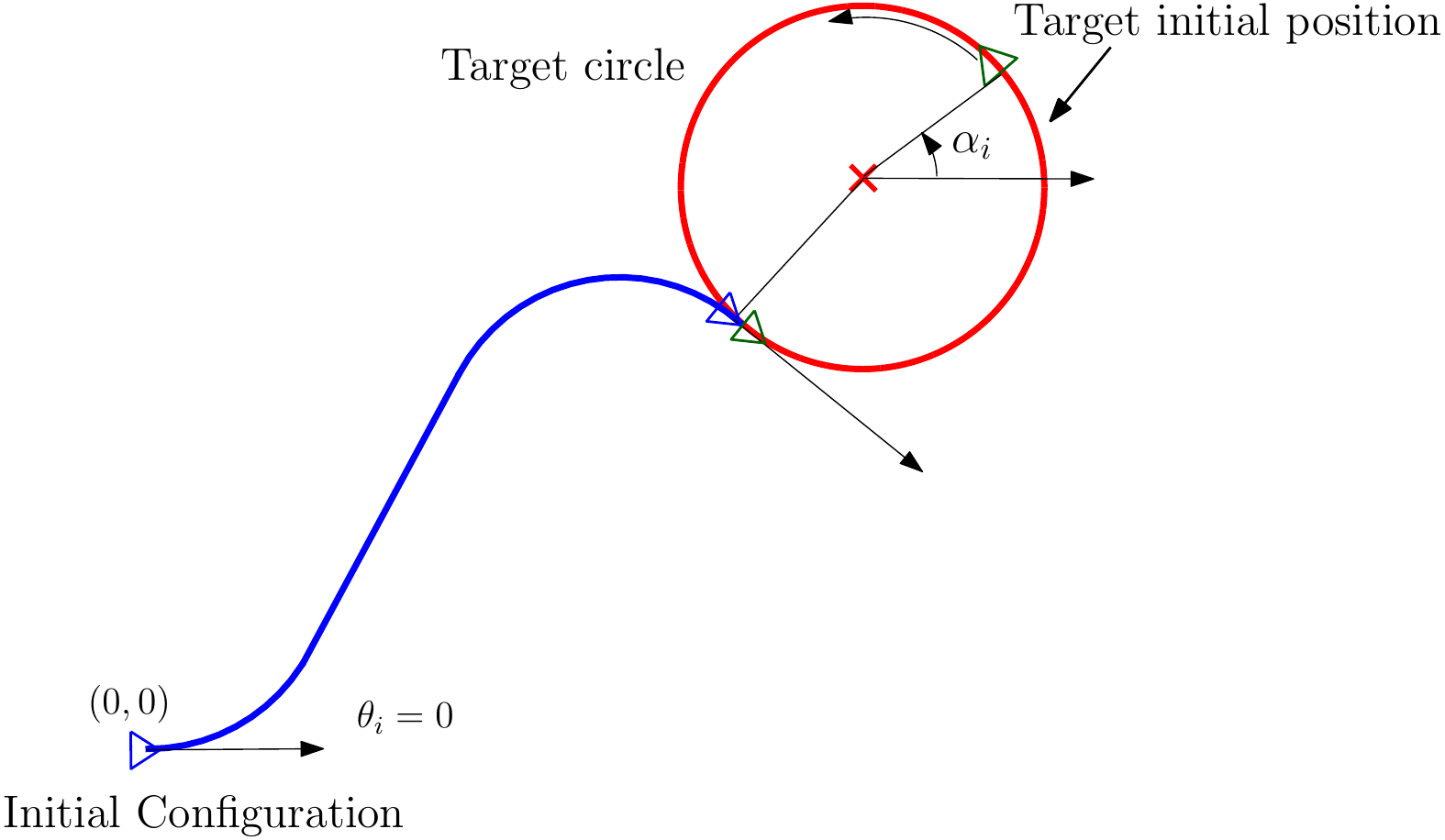}
\caption{Feasible pursuer path to intercept a target traveling on a circle.}
\label{fig:probdesc} 
\end{figure}

A feasible solution to the ITOCP is a path that starts at the pursuer's initial configuration, and ends on the target circle as shown in Fig.~\ref{fig:probdesc}. The solution to this problem is given by the optimal interception point ($\alpha^*$) and the curvature constrained path for the pursuer from its initial position to the interception point.

For given initial and final positions and headings, the well known results by Dubins \cite{Dubins1957} states that the optimal path contains three segments, where each segment could be a straight line (represented by letter S) or a circular arc of minimum turn radius (represented by letter C). The circular arcs could be clockwise or counter-clockwise (represented by R or L respectively). Dubins result says that the minimum of four different CSC paths (\textit{viz.} LSL, LSR, RSR, RSL) and two CCC paths (\textit{viz.} LRL and RLR) is the path of minimum length satisfying the curvature constraints. Furthermore, assuming the distance between the initial and final points is greater than $4\rho$, the optimal path is only of the four CSC paths~\cite{bui1994shortest, goaoc2013bounded}.

The path planning problem for ITOCP has additional temporal constraints, as the travel time for the pursuer and the target must be equal. Therefore, one may question the applicability of Dubins result in this case, which says the optimal path should be a concatenation of three segments. In the following, we prove, assuming the initial position of the pursuer and any point on the circle is greater than $4\rho$, that the optimal solution to the ITOCP is a Dubins path. Furthermore, we present an iterative algorithm to find the point of interception on the target circle. The iterative algorithm also yields a feasible solution when the $4\rho$ condition does not hold, however, the optimality is not guaranteed.

We summarize the  contributions of this paper as follows:

\begin{itemize}
    \item Under $4\rho$ condition, we prove that the length of the Dubins path $D_{CSC}(\alpha)$ from an initial configuration ($p_i, \theta_i$) to a final point ($\alpha$) on the circle  is a continuous function with respect to $\alpha$.
    \item Under $4\rho$ condition, we prove that the optimal path that intercepts a target moving on a circle at constant speed is a Dubins path, \textit{i.e.} the path contains at most three segments circle-straight line-circle.
    \item We present an iterative algorithm to find the optimal interception point on the target circle and corresponding pursuer's path.
\end{itemize}

\subsection{Solution Approach}
We give a brief explanation of the solution method developed to solve the ITOCP. Let $D_{CSC}(\alpha) \triangleq \min \{D_{LSR}, D_{LSL}, D_{RSL}, D_{RSR} \}$ denote the minimum length (Dubins) path between the initial position of the pursuer and a point at angular position $\alpha$ on the circle. A plot of the lengths of the four types of CSC paths as a function of $\alpha$ is shown in Fig.~\ref{fig:LengthsCSC}. We show that $D_{CSC}(\alpha)$ is a continuous function in the interval $\alpha \in [0, 2 \pi)$, and we know it is non-monotonic from \cite{Manyam2018Shortest}. The function $D_{CSC}(\alpha)$ for any positive value of $\alpha \in [0, \infty)$) is given as the following:
\begin{equation}
   D_{CSC}(\alpha)=D_{CSC} (\mbox{mod}(\alpha,2\pi)). 
\end{equation}
Note, $D_{CSC}(\alpha)$ is periodic, bounded and continuous. This function is useful as it can be used to find the travel time for the pursuer from its initial position, to a point $\alpha$ on the target circle is given by $T_p(\alpha) = D_{CSC}(\alpha)/v_p$. 

The target starts from  an initial position $\alpha_i$ on the circle, and it travels at constant speed. The travel time for the target to any position on the circle is equal to $T_t (\alpha) = (\alpha - \alpha_i)r_t/v_t$. Clearly $T_t (\alpha)$ is linearly increasing function with a minimum of $0$, and $T_p(\alpha)$ is a bounded periodic function. Therefore, the two functions $T_p (\alpha)$ and $T_t (\alpha)$ are bound to intersect for some $\alpha = \alpha^*$. 

The plots of the travel times of the pursuer and the target, $T_p(\alpha)$ and $T_t(\alpha)$ are shown in Fig. \ref{fig:PrTrTimes}. We will prove that the function $D_{CSC}(\alpha)$ is continuous and bounded. In the next section we analyze the four CSC paths $\{LSR, LSL, RSL, RSR \}$, and prove the continuity of the $D_{CSC}(\alpha)$ with respect to the angular position $\alpha$.

\begin{figure}[htpb]
\centering
\includegraphics[width=0.9\columnwidth]{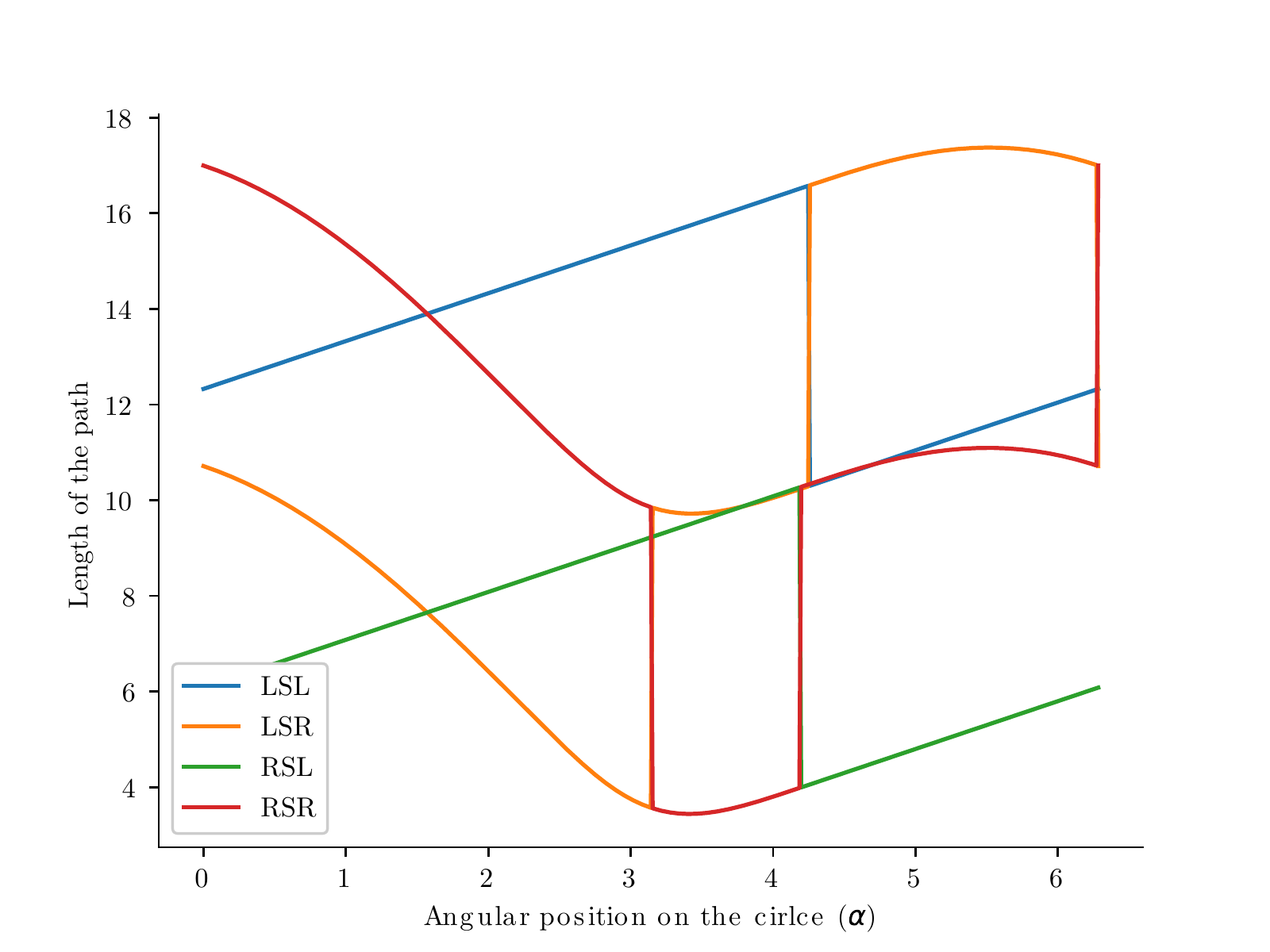}
\caption{Plot of the lengths of the four CSC type Dubins paths versus angular position ($\alpha$)}
\label{fig:LengthsCSC} 
\end{figure}

\begin{figure}[htpb]
\centering
\includegraphics[width=0.9\columnwidth]{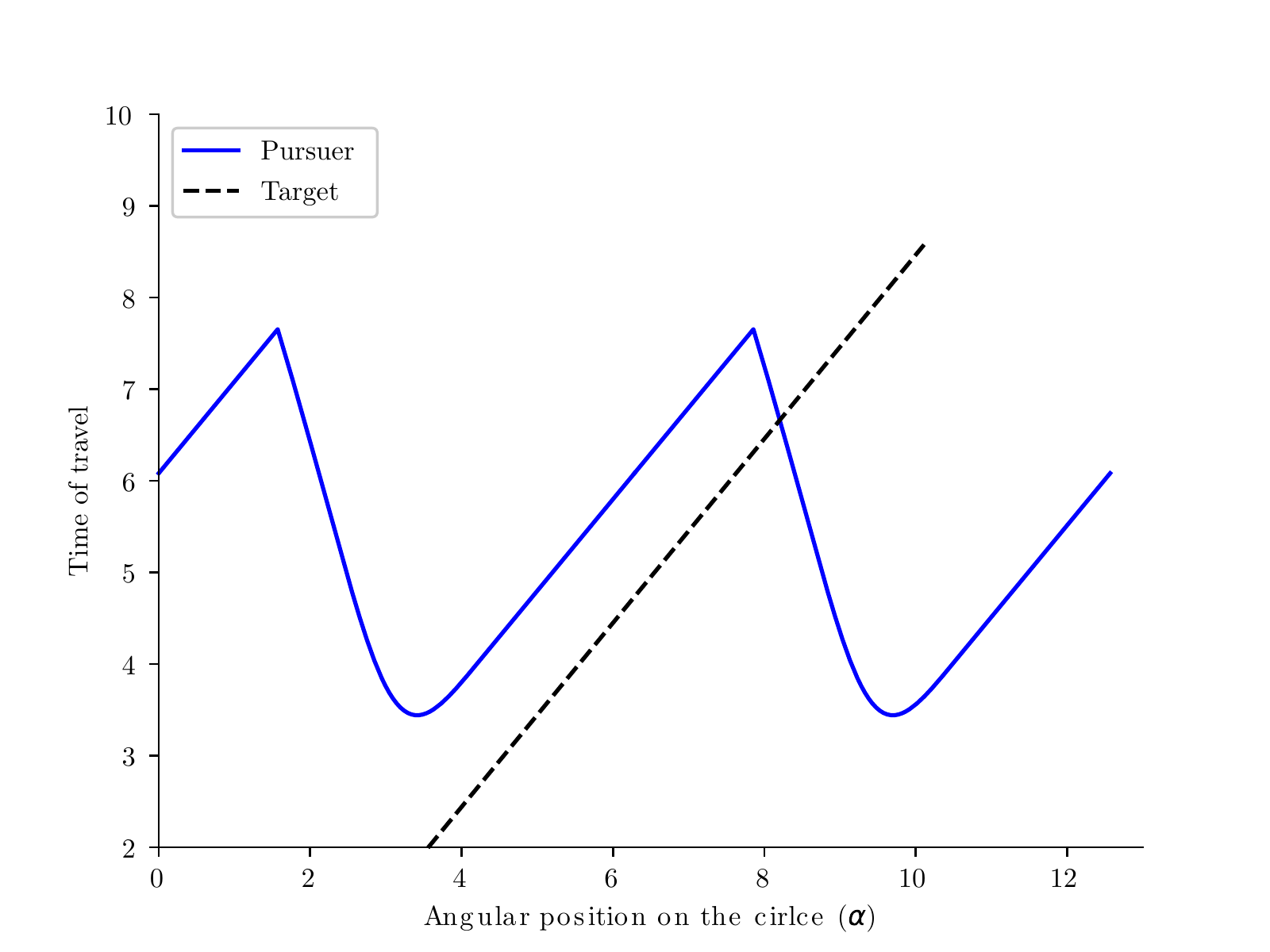}
\caption{Plot of the travel times of the pursuer ($T_p$) and the target ($T_t$)}
\label{fig:PrTrTimes} 
\end{figure}

\section{Analysis of Dubins Paths} \label{sec:dubpaths}

Let $D_{CSC}(\alpha)$ be the length of the shortest CSC path, $D_{CSC} = \min\{D_{LSL}, D_{LSR}, D_{RSR}, D_{RSL} \}$, starting from an the initial position ($p_i$) and heading ($\theta_i$), to a final point defined by the angular position $\alpha$, and the final direction is tangent to the target circle. For all the analysis we present here, we assume the rotational direction at the target circle is clockwise, and the final heading $\theta$ is given as $\theta = \alpha - \frac{\pi}{2}$. The case when the target is traveling counter-clockwise has symmetry between the clockwise case and can be found in the same fashion. We also assume that the minimum turn radius of the pursuer and the radius of target circle are equal. Though the results presented also apply when those two values are not equal, we did not present the proofs due to the page restrictions.

\begin{theorem} \label{thm:csccont}
Under the $4\rho$ condition, the function $D_{CSC}(\alpha)$ is continuous function in its domain $\alpha \in [0, 2 \pi ]$.
\end{theorem}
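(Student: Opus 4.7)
The plan is to establish continuity of each of the four CSC length functions $D_{LSL}(\alpha)$, $D_{LSR}(\alpha)$, $D_{RSR}(\alpha)$, $D_{RSL}(\alpha)$ individually on $[0,2\pi]$, and then conclude by the standard fact that the pointwise minimum of finitely many continuous functions is continuous.

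First I would parameterize each path by its two turning-circle centers. The initial circle center $O_1$ depends only on the fixed data $p_i,\theta_i,\rho$ and is the same constant across all $\alpha$. The terminal circle center $O_2(\alpha)$ sits at the offset $\rho$ perpendicular to the tangential final heading $\theta=\alpha-\pi/2$ from the interception point $(c_x+r_t\cos\alpha,\, c_y+r_t\sin\alpha)$; this makes $O_2(\alpha)$ a smooth (hence continuous) $\mathbb{R}^2$-valued function of $\alpha$.

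Next, using the closed-form expressions collected in the companion paper \cite{Manyam2018Shortest}, I would write each CSC length as an explicit function of the pair $(O_1, O_2(\alpha))$. For same-sense paths (LSL, RSR), the straight segment has length $\|O_2-O_1\|$ and the two arc lengths are given by angular differences that reduce to $\operatorname{atan2}$-type expressions of the center coordinates; all of these are continuous in $\alpha$ without restriction. For opposite-sense paths (LSR, RSL), the straight segment has length $\sqrt{\|O_2-O_1\|^2-(2\rho)^2}$ and the arc lengths involve $\arcsin\!\bigl(2\rho/\|O_2-O_1\|\bigr)$, which are continuous as long as $\|O_2(\alpha)-O_1\|\ge 2\rho$.

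The $4\rho$ hypothesis is precisely what closes this gap. Each turning-circle center lies within distance $\rho$ of its associated endpoint (pursuer or interception point), so by the triangle inequality
\begin{equation}
\|O_2(\alpha)-O_1\|\;\ge\;\|p_i-q(\alpha)\|-2\rho\;\ge\;4\rho-2\rho\;=\;2\rho,
\end{equation}
where $q(\alpha)$ is the interception point, and the bound holds strictly under the strict $4\rho$ assumption. Thus all four CSC length functions are defined and continuous on the full domain $[0,2\pi]$, each of them being a composition of continuous arithmetic, $\sqrt{\cdot}$, $\operatorname{atan2}$, and $\arcsin$ maps of $O_2(\alpha)$. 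Taking the minimum yields continuity of $D_{CSC}(\alpha)$; periodicity in $\alpha$ also guarantees that the values at $\alpha=0$ and $\alpha=2\pi$ agree.

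The main obstacle I would expect is not the minimum operation itself but the bookkeeping around arc-length formulas. Turning angles are defined modulo $2\pi$, and one has to verify that the particular branch chosen (to measure the actual swept angle) varies continuously with $\alpha$, rather than jumping by $2\pi$ across some threshold. Ruling this out amounts to tracking how the tangent direction of the straight segment rotates as $O_2(\alpha)$ moves, and confirming that the $4\rho$ condition keeps $\|O_2(\alpha)-O_1\|$ bounded away from $2\rho$, so no branch crossing of $\arcsin$ or degenerate common tangent configuration occurs for any $\alpha$.
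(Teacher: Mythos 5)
There is a genuine gap. Your plan rests on each of the four individual length functions $D_{LSL}(\alpha)$, $D_{LSR}(\alpha)$, $D_{RSR}(\alpha)$, $D_{RSL}(\alpha)$ being continuous on all of $[0,2\pi]$, so that the minimum of finitely many continuous functions is continuous. That premise is false, and the $4\rho$ condition does not rescue it. The $4\rho$ hypothesis only guarantees $\|O_2(\alpha)-O_1\|\ge 2\rho$, i.e.\ that the cross tangent for the LSR and RSL types exists and the $\arcsin$ stays in its domain. It does nothing to prevent an arc's swept angle from passing through zero as $\alpha$ varies over a full revolution of the target circle. Because each swept angle is taken modulo $2\pi$ (the vehicle must turn in a fixed sense, so a ``slightly negative'' required turn becomes a turn of almost $2\pi$), every individual CSC length function jumps by $2\pi\rho$ at the value of $\alpha$ where its first or last arc degenerates. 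This is exactly the branch-crossing issue you flag at the end as ``the main obstacle,'' but your proposed resolution --- that the $4\rho$ bound keeps $\|O_2(\alpha)-O_1\|$ away from $2\rho$ and hence rules out the jump --- addresses a different degeneracy than the one that actually occurs. The arc-disappearance discontinuities are generic and unavoidable; the paper even exhibits one numerically for the LSL path.

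The argument that actually closes the theorem is structured around these discontinuity points rather than around termwise continuity. At a value $\alpha_{LS}$ where the final arc of the LSL path vanishes, the LSL and LSR paths both degenerate to the \emph{same} LS path, so $D_{LSL}(\alpha_{LS})=D_{LSR}(\alpha_{LS})$; one then checks that $D_{LSL}$ is continuous from one side and $D_{LSR}$ from the other, and that whichever is smaller on each side is the one-sidedly continuous branch, so $\min\{D_{LSL},D_{LSR}\}$ is continuous across $\alpha_{LS}$ even though neither function is continuous there by itself. The symmetric statement handles the disappearance of the first arc (LSL paired with RSL), and the remaining cases follow by mirror symmetry. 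Your reduction to ``min of continuous functions'' skips precisely this pairing mechanism, which is the substantive content of the proof.
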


Proof of Theorem \ref{thm:csccont} will be in the lemmas that follow. 

For any CSC path, we know from~\cite{Manyam2018Shortest} that there may exist a discontinuity if the first circular arc or the final circular arc disappears. For example, consider an $LSL$ path shown in Fig.~\ref{fig:LSLtoLSRb}, starting at $p_i$ and ends on the target circle. When the final point on the target circle is at $\alpha=3.2$, the three segments of the LSL path exist. As the final point is moved in clockwise direction, the length of the third segment reduces, and finally disappears at $\alpha = 2.4$. If the final point is further moved to $\alpha=2$, there is a jump in the length of the final arc, and this would be $2\pi \rho$ at $\alpha=2.4-\epsilon$. This causes the discontinuity in the length of the LSL path at $\alpha=2.4$, shown in Fig.~\ref{fig:LSLtoLSRa}. However, if the mode of the CSC path changes to LSR here, we claim that there would not be this discontinuity, \textit{i.e}, the length $\min \{D_{LSL}, D_{LSR} \}$ is continuous at $\alpha=2.4$. We will prove this more rigorously in the proof of Lemma~\ref{lem:LSLtoLSR}. A similar discontinuity occurs when the first arc disappears for the LSL path at $\alpha=3.35$ shown in Fig.~\ref{fig:LSLtoRSL}. This discontinuity would not occur if the mode of the CSC path changes to RSL, and the length $\min \{D_{LSL}, D_{RSL} \}$ is continuous at $\alpha=3.35$.

\begin{figure}[htpb]
\centering
\subfigure[The length of the paths LSR and RSR versus the angular position on the target circle]{\includegraphics[width=0.8\columnwidth]{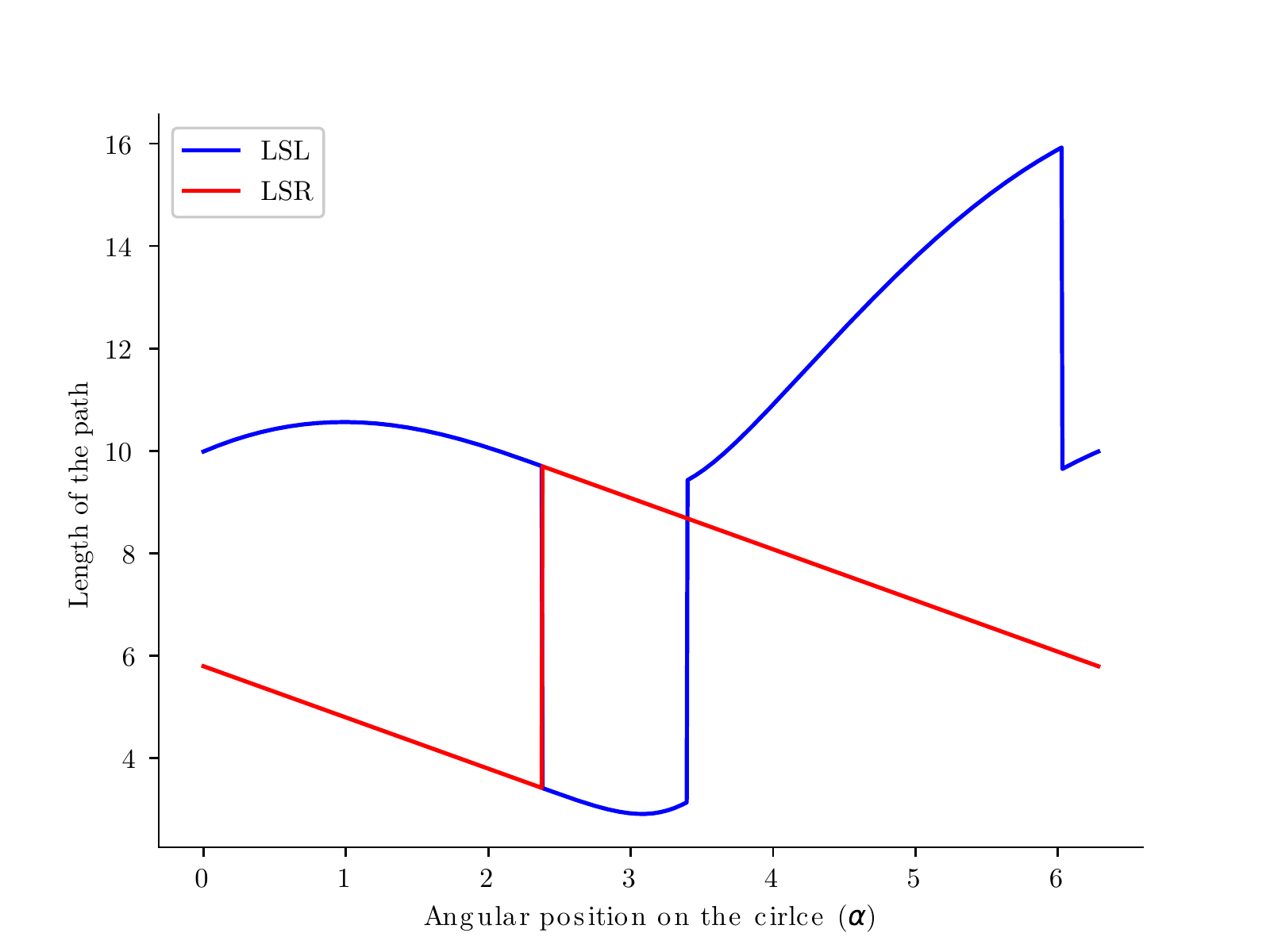} \label{fig:LSLtoLSRa} } 
\subfigure[This figure shows the transitioning of the Dubins path from LSR to LSL as the angular position is moved on the target circle]{\includegraphics[width=0.8\columnwidth]{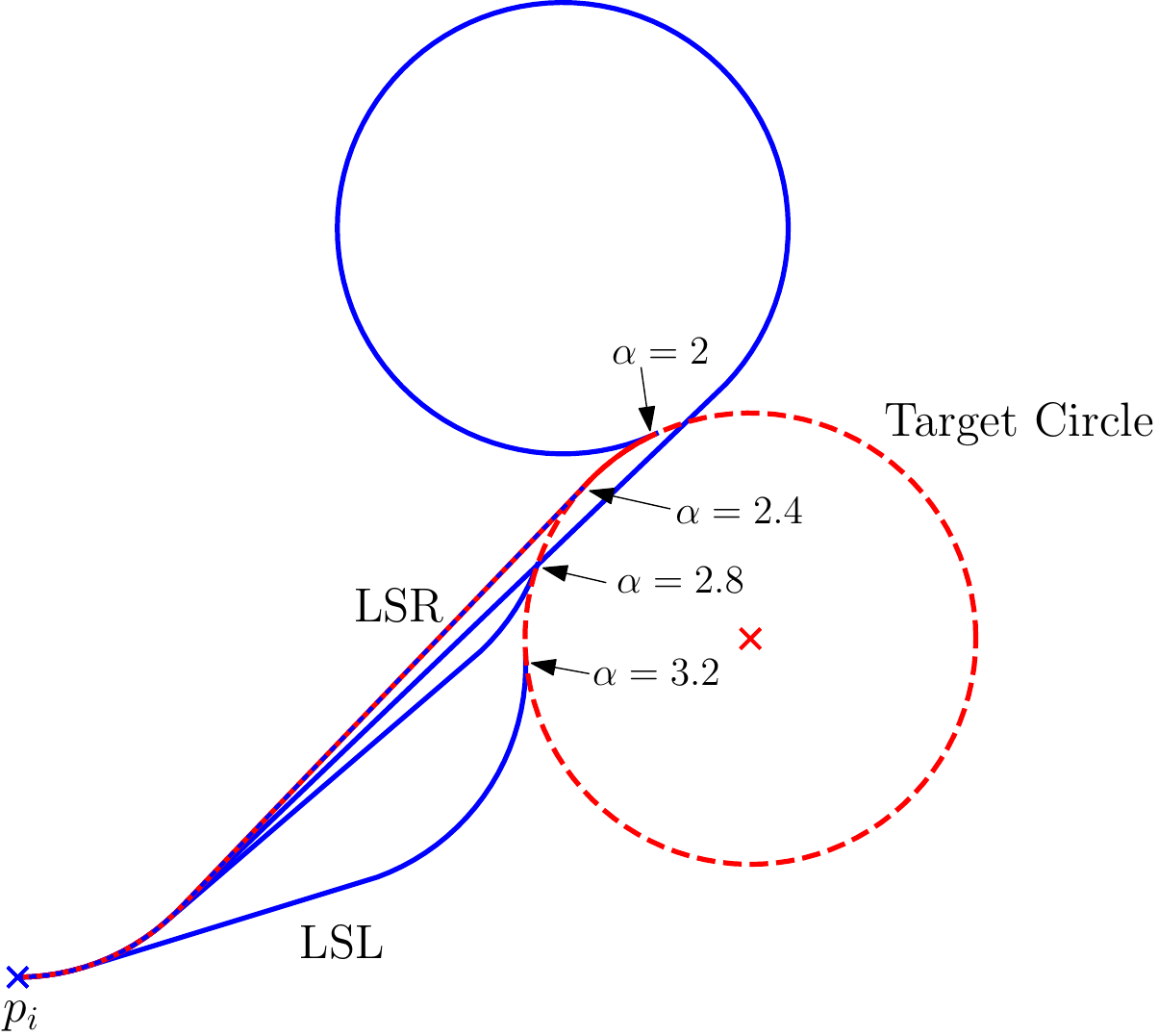}\label{fig:LSLtoLSRb}}
\caption{Transitioning of Dubins path from LSR to LSL}
\label{fig:LSLtoLSR} 
\end{figure}

\begin{figure}[htpb]
\subfigure[The length of the paths LSL and RSL versus the angular position on the target circle]{\includegraphics[width=0.8\columnwidth]{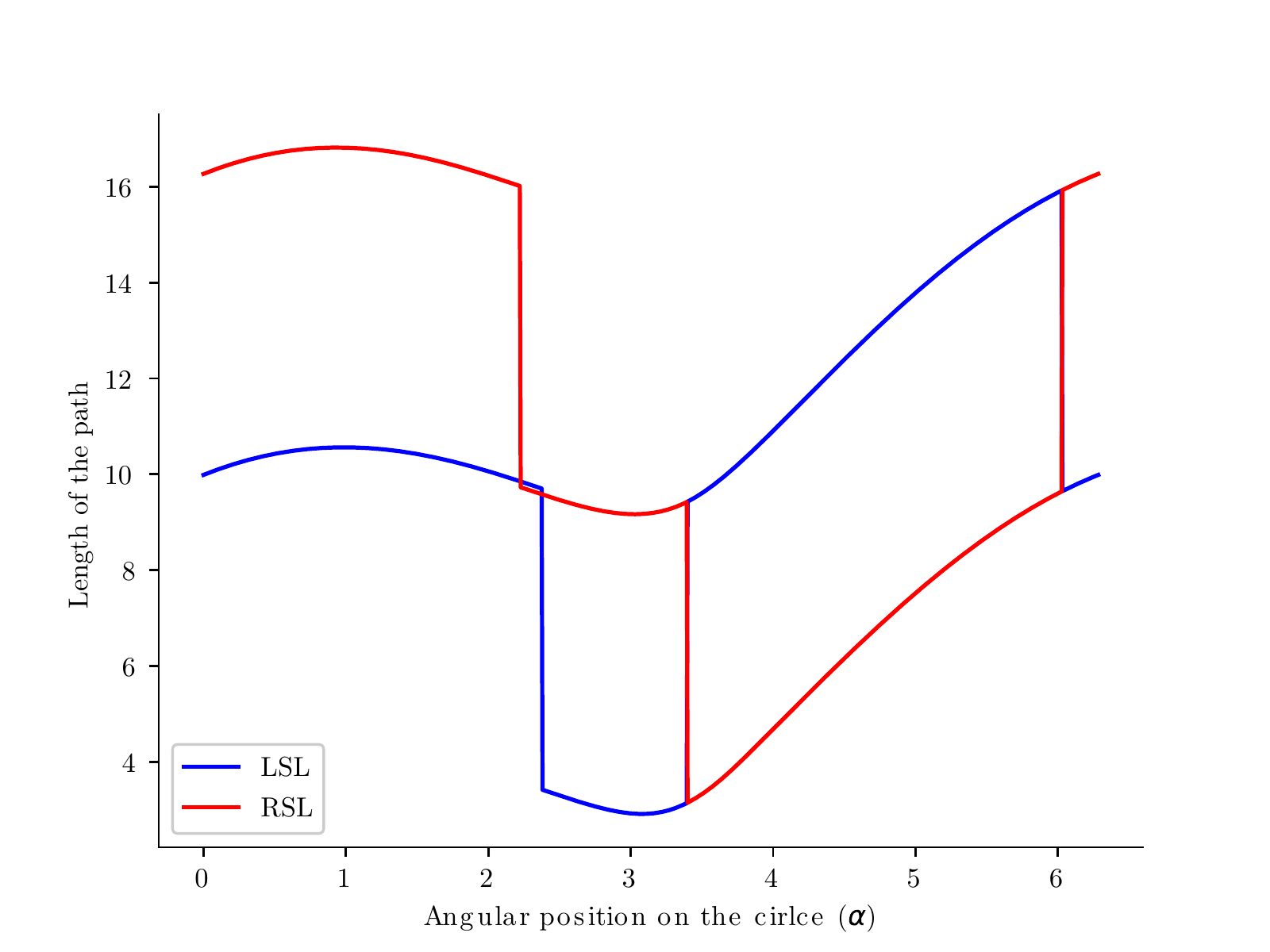}\label{fig:LSLtoRSLa} } 
\subfigure[This figure shows the transitioning of the Dubins path from LSL to RSL as the angular position is moved on the target circle]{\includegraphics[width=0.8\columnwidth]{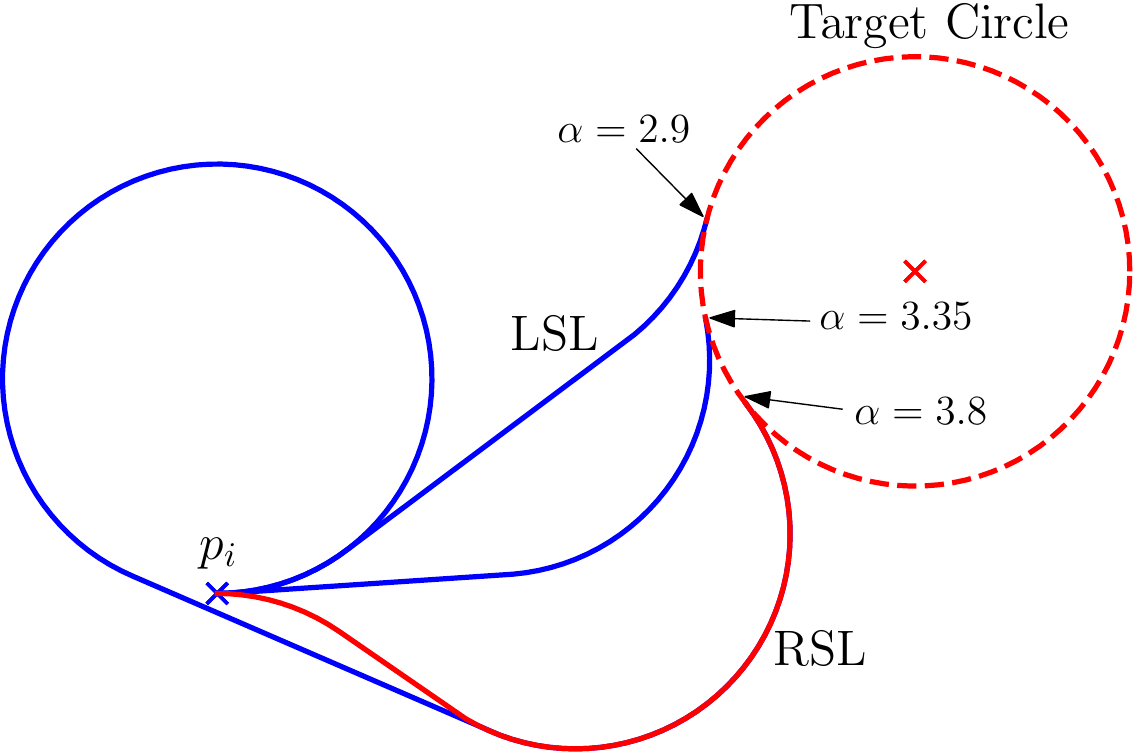} \label{fig:LSLtoRSLb} }
\caption{Transitioning of Dubins path from LSL to RSL}
\label{fig:LSLtoRSL} 
\end{figure}

For any path in the set \{LSL, LSR, RSL, RSR\}, starting from an initial configuration and ending at a final point on the target circle with heading tangential to the circle, and if the final point is moved around the circle, a discontinuity may occur when the first arc or the second arc disappear. We claim that the discontinuity will not occur if the mode of the arc that disappears, changes from L to R or R to L. In a general scenario, either both the first arc and second arc may disappear simultaneously or only one of the arcs may disappear. If only one of the arc disappears, we will prove in the following Lemmas \ref{lem:LSLtoLSR} and \ref{lem:LSLtoRSL} that the length of the CSC path remains continuous if the arc mode changes. For the case when both arcs disappear, the proof follows by combining those two Lemmas.

\subsection{Continuity of CSC paths}
When only one arc disappears, there could be four different change of modes \textit{viz.} $(i)$ LSL - LSR, $(ii)$ RSR - RSL, $(iii)$ LSL - RSL and $(iv)$ RSR - LSR. Among these four, $(ii)$ and $(iv)$ are mirror images of $(i)$ and $(iii)$. We consider the cases $(i)$ and $(iii)$, and prove the continuity of the min length path. Clearly, the other two cases follow due to symmetry.

Let the position on the target circle at which the second arc of the LSR and LSL paths disappear be $\alpha_{LS}$. The length of these two paths has a discontinuity at $\alpha = \alpha_{LS}$. Similarly, let $\alpha_{SL}$ be the position on the target circle at which the first arc of LSL and RSL paths disappear. 

\begin{lemma} \label{lem:LSLtoLSR}
The minimum length of the two paths $\min \{D_{LSR}, D_{LSL} \}$ is continuous at $\alpha = \alpha_{LS}$.
\end{lemma}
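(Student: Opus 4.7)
My plan is to exploit the fact that at $\alpha=\alpha_{LS}$ both the $LSL$ and the $LSR$ paths degenerate to a common two-segment $LS$ path whose final arc has length zero, and then to show that each mode continuously extends this common value on the side of $\alpha_{LS}$ on which its final arc is short.

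First I would pin down the geometry at $\alpha_{LS}$ itself. Both modes share the same initial $L$-turn circle (determined by $(p_i,\theta_i)$), and in both cases the straight-line segment must leave this circle tangentially and terminate at the interception point with heading $\alpha_{LS}-\tfrac{\pi}{2}$. When the final arc length is zero, this straight line already satisfies both the position and heading constraints at the interception point. Consequently the straight line is uniquely determined and identical for the two modes, so $D_{LSL}(\alpha_{LS})=D_{LSR}(\alpha_{LS})=L^{\ast}$, where $L^{\ast}$ denotes the length of this common $LS$ path. This coincidence at $\alpha_{LS}$ is the key observation.

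Second, I would invoke the closed-form CSC length expressions from \cite{Manyam2018Shortest} to argue that on each side of $\alpha_{LS}$, the mode whose final arc is short depends continuously on $\alpha$. Concretely, on the side where the $LSL$ final arc length $\phi_L(\alpha)$ is positive and small, the three segment lengths depend continuously on $\alpha$, and $\phi_L(\alpha)\to 0$ as $\alpha\to\alpha_{LS}$, yielding $D_{LSL}(\alpha)\to L^{\ast}$. The symmetric statement gives $D_{LSR}(\alpha)\to L^{\ast}$ from the opposite side.

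Third, I would stitch the two one-sided limits. On the side where $D_{LSL}$'s final arc is short, the $LSR$ path exists only by wrapping its final arc nearly all the way around, so $D_{LSR}(\alpha)\approx L^{\ast}+2\pi\rho$ and hence $\min\{D_{LSL},D_{LSR}\}=D_{LSL}$; the roles reverse on the other side. Therefore
\begin{align*}
\lim_{\alpha\to\alpha_{LS}^{-}}\min\{D_{LSL}(\alpha),D_{LSR}(\alpha)\} &= L^{\ast},\\
\lim_{\alpha\to\alpha_{LS}^{+}}\min\{D_{LSL}(\alpha),D_{LSR}(\alpha)\} &= L^{\ast},
\end{align*}
and $\min\{D_{LSL}(\alpha_{LS}),D_{LSR}(\alpha_{LS})\}=L^{\ast}$, establishing continuity at $\alpha_{LS}$. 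The main obstacle I anticipate is a clean justification that the two modes really do share the same $LS$ path at $\alpha_{LS}$ rather than merely sharing a length; this requires verifying, using the $4\rho$ assumption, that the straight line emerging tangentially from the initial $L$-circle and arriving at the specified final configuration is unique. Once this geometric uniqueness is nailed down, the rest reduces to a routine continuity check on the standard Dubins length formulas.
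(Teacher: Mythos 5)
Your proposal is correct and follows essentially the same route as the paper: both arguments rest on the coincidence $D_{LSL}(\alpha_{LS})=D_{LSR}(\alpha_{LS})$ via the common degenerate $LS$ path, then establish one-sided continuity of each mode on the side where its final arc is short, and conclude because the minimum selects the continuous branch on each side (the other branch jumping by $2\pi\rho$). The only difference is one of explicitness: the paper carries out the one-sided limits by direct computation with the $\bmod\, 2\pi$ formulas for $\phi_2^{LSL}$ and $\phi_2^{LSR}$, whereas you defer that step to a routine check of the closed-form length expressions.
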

\begin{proof}
The length of the LSL path $D_{LSL}(\alpha)$ is sum of the three segments, straight line $L^{LSL}_S(\alpha)$, first and second arcs $\rho\phi^{LSL}_1(\alpha)$ and $\rho\phi^{LSL}_2(\alpha)$. And let $L^{LSR}_S$, $\phi^{LSR}_1$ and $\phi^{LSR}_2$ be the corresponding lengths of the LSR path.\footnote{For brevity, we represent these functions as $L^{LSL}_S$, $\phi^{LSL}_1$ and $\phi^{LSL}_2$.} These lengths are given as the following:

\begin{align}
    L^{LSL}_S &= \sqrt{(c_x + 2\rho \cos{\alpha})^2 + (c_y+2\rho\sin{\alpha}-\rho )^2 },\\
    \phi^{LSL}_1 &= \mod \left( \arctan \left( \frac{c_y+2\rho\sin{\alpha}-\rho}{c_x + 2\rho\cos{\alpha}} \right), 2\pi \right), \label{eq:phi1LSL} \\
    \phi^{LSL}_2 &= \mod \left(\alpha-\phi^{LSL}_1 - \frac{\pi}{2}, 2 \pi \right), \\
    L^{LSR}_S &= \sqrt{c_x^2 + (c_y-\rho)^2 - 4\rho^2}, \\
    \phi^{LSR}_1 &= \mod \left(\psi_1 + \psi_2, 2\pi \right), \\
    \phi^{LSR}_2 &= \mod \left(\phi^{LSR}_1 -\alpha +\frac{\pi}{2}, 2\pi \right) ,
\end{align}
where $\psi_1$ and $\psi_2$ are given by
\begin{align*}
\psi_1 &= \arctan\left(\frac{2\rho}{L^{LSR}_S} \right), \\
\psi_2 &= \arctan\left( \frac{c_y-\rho}{c_x} \right).
\end{align*}

At $\alpha = \alpha_{LS}$, the second arc disappears for the LSL and LSR paths, and these two paths degenerate to LS paths. Clearly, there could be only one possible LS path with the final heading as clockwise tangent to the target circle, and therefore $D_{LSL}(\alpha_{LS}) = D_{LSR}(\alpha_{LS})$. 

Now we prove that the in the neighborhood of the $\alpha_{LS}$, length of the paths $D_{LSL}(\alpha)$ is right continuous and $D_{LSR}(\alpha)$ is left continuous.

The function $L^{LSL}_S (\alpha)$ is continuous everywhere, $\phi^{LSL}_1 (\alpha)$ is decreasing at $\alpha_{LS}$, and due to the assumption that only one arc disappears, $\phi^{LSL}_1 (\alpha)$  is continuous at $\alpha = \alpha_{LS}$. For $\delta >0$, we have $$ \phi^{LSL}_1 (\alpha_{LS} + \delta) = \phi^{LSL}_1 (\alpha_{LS}) - k \delta.$$ Therefore, $\phi^{LSL}_2 (\alpha_{LS} + \delta)$ can be given by
\begin{align*}
    \phi_2^{LSL}(\alpha_{LS}+\delta) &= \mod \left( \alpha + \delta -\phi_1^{LSL}(\alpha_{LS}) + k\delta - \frac{\pi}{2}, 2\pi \right)\\
    &= \mod ( k\delta+\delta,2\pi) \\
    &= k\delta+\delta.
\end{align*}
Therefore, for any $\epsilon  \ni \phi_2^{LSL}(\alpha_{LS}+\delta)-\phi_2^{LSL}(\alpha_{LS}) <\epsilon$, we can find $\delta$. This proves the right continuity of $\phi_2^{LSL}(\alpha)$, and thus $D_{LSL}(\alpha)$ is right continuous at $\alpha=\alpha_{LS}$.

In the LSR path, the first arc $\phi^{LSR}_1$, and the straight line $L^{LSR}_S$ are constants.
\begin{align*}
    & \phi_2^{LSR}(\alpha_{LS}-\delta) - \phi_2^{LSR}(\alpha_{LS}) \\
    &= \mod \left( \phi_1^{LSR}(\alpha_{LS}-\delta) - \phi_1^{LSR}(\alpha_{LS})  + \delta , 2\pi\right) \\
    &= \delta.
\end{align*}
Clearly, $\phi_2^{LSR}(\alpha)$ is left continuous, and thus the length $D_{LSR}(\alpha)$ is left continuous at $\alpha=\alpha_{LS}$.
It is straight forward to see that $D_{LSR}(\alpha) < D_{LSL}(\alpha)$ when $\alpha < \alpha_{LS}$, $D_{LSR}(\alpha) > D_{LSL}(\alpha)$ when $\alpha > \alpha_{LS}$, and $D_{LSR}(\alpha) = D_{LSL}(\alpha)$ when $\alpha = \alpha_{LS}$. Therefore, $\min \{D_{LSR}(\alpha), D_{LSL}(\alpha) \}$ is continuous in the neighborhood of $\alpha_{LS}$.

\end{proof}

\begin{lemma} \label{lem:LSLtoRSL}
The minimum of the length of the two paths $\min \{D_{LSL}(\alpha), D_{RSL}(\alpha) \}$ is continuous at $\alpha = \alpha_{SL}$.
\end{lemma}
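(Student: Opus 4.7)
The plan is to mirror the structure of the proof of Lemma~\ref{lem:LSLtoLSR}, with the roles of first and second arcs interchanged. I would begin by writing the explicit segment-length expressions $L_S^{LSL},\phi_1^{LSL},\phi_2^{LSL}$ and $L_S^{RSL},\phi_1^{RSL},\phi_2^{RSL}$ in terms of $\alpha$. Because LSL and RSL share the same terminating L arc (same final point and clockwise tangent heading $\alpha-\pi/2$), the center of the final left-turning circle is common to both, and the two modes differ only in the starting circle, whose center is $(0,\rho)$ for LSL and $(0,-\rho)$ for RSL.

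At $\alpha=\alpha_{SL}$ the first arc of each path has length zero, so both paths collapse to SL paths sharing the same initial configuration $(p_i,\theta_i)$ and the same final position and tangent direction. Such an SL path is uniquely determined, so the straight-line length $L_S$ and the second (L) arc length $\phi_2$ match between the two modes, giving $D_{LSL}(\alpha_{SL})=D_{RSL}(\alpha_{SL})$.

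Next I would establish one-sided continuity on each side of $\alpha_{SL}$. As $\alpha$ moves past $\alpha_{SL}$ in one direction (say $\alpha>\alpha_{SL}$), the LSL path keeps a small positive first L arc, so $L_S^{LSL},\phi_1^{LSL},\phi_2^{LSL}$ vary continuously and $D_{LSL}$ is right-continuous at $\alpha_{SL}$; in the opposite direction $\phi_1^{LSL}$ would have to wind almost once around, producing a jump of $2\pi\rho$. By symmetry, for $\alpha<\alpha_{SL}$ the first R arc of RSL stays small and $D_{RSL}$ is left-continuous at $\alpha_{SL}$, while it jumps by $2\pi\rho$ for $\alpha>\alpha_{SL}$. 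The $\bmod\,2\pi$ accounting proceeds exactly as in Lemma~\ref{lem:LSLtoLSR}: one writes $\phi_1^{LSL}(\alpha_{SL}+\delta)=\phi_1^{LSL}(\alpha_{SL})+k\delta$ and the analogous linearization for $\phi_1^{RSL}$, then substitutes into the formula for $\phi_2$ to check that each segment varies by $O(\delta)$ on its ``good'' side.

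Finally, since $D_{LSL}(\alpha)<D_{RSL}(\alpha)$ on the side where the LSL first arc is small and $D_{RSL}(\alpha)<D_{LSL}(\alpha)$ on the other side, with equality at $\alpha_{SL}$, the minimum $\min\{D_{LSL}(\alpha),D_{RSL}(\alpha)\}$ coincides on each side with the continuously-defined branch and is therefore continuous at $\alpha_{SL}$. The main obstacle I anticipate is the bookkeeping of which side of $\alpha_{SL}$ corresponds to each mode's small-arc regime, because the $\bmod\,2\pi$ conventions used in $\phi_1^{LSL}$ and $\phi_1^{RSL}$ differ in sign and orientation; fixing this is a matter of geometry rather than new technique, and once it is pinned down the algebra is a direct transcription of the previous lemma.
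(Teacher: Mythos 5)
Your plan is correct and is essentially the paper's argument: the main text gives a short geometric-perturbation version, and the appendix gives exactly the analytic mirror of Lemma~\ref{lem:LSLtoLSR} that you describe (noting $D_{LSL}(\alpha_{SL})=D_{RSL}(\alpha_{SL})$ via the unique degenerate SL path, linearizing $\phi_1$ and checking the sign of its derivative at $\alpha_{SL}$ to see on which side the $\bmod\,2\pi$ wrap occurs, and letting the minimum follow the continuous branch on each side). The only caveat is that the side assignment is the reverse of your tentative guess --- $D_{LSL}$ is continuous from the left of $\alpha_{SL}$ and $D_{RSL}$ from the right --- but you explicitly flagged this bookkeeping as the detail to pin down, and the conclusion is unaffected.
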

\begin{proof}
We present the proof of this lemma using geometric perturbations. Also, an analytical proof similar to the proof of Lemma~\ref{lem:LSLtoLSR} is presented in the Appendix.

\begin{figure}
    \centering
    \subfigure[SL to LSL]{\includegraphics[width=0.6\columnwidth]{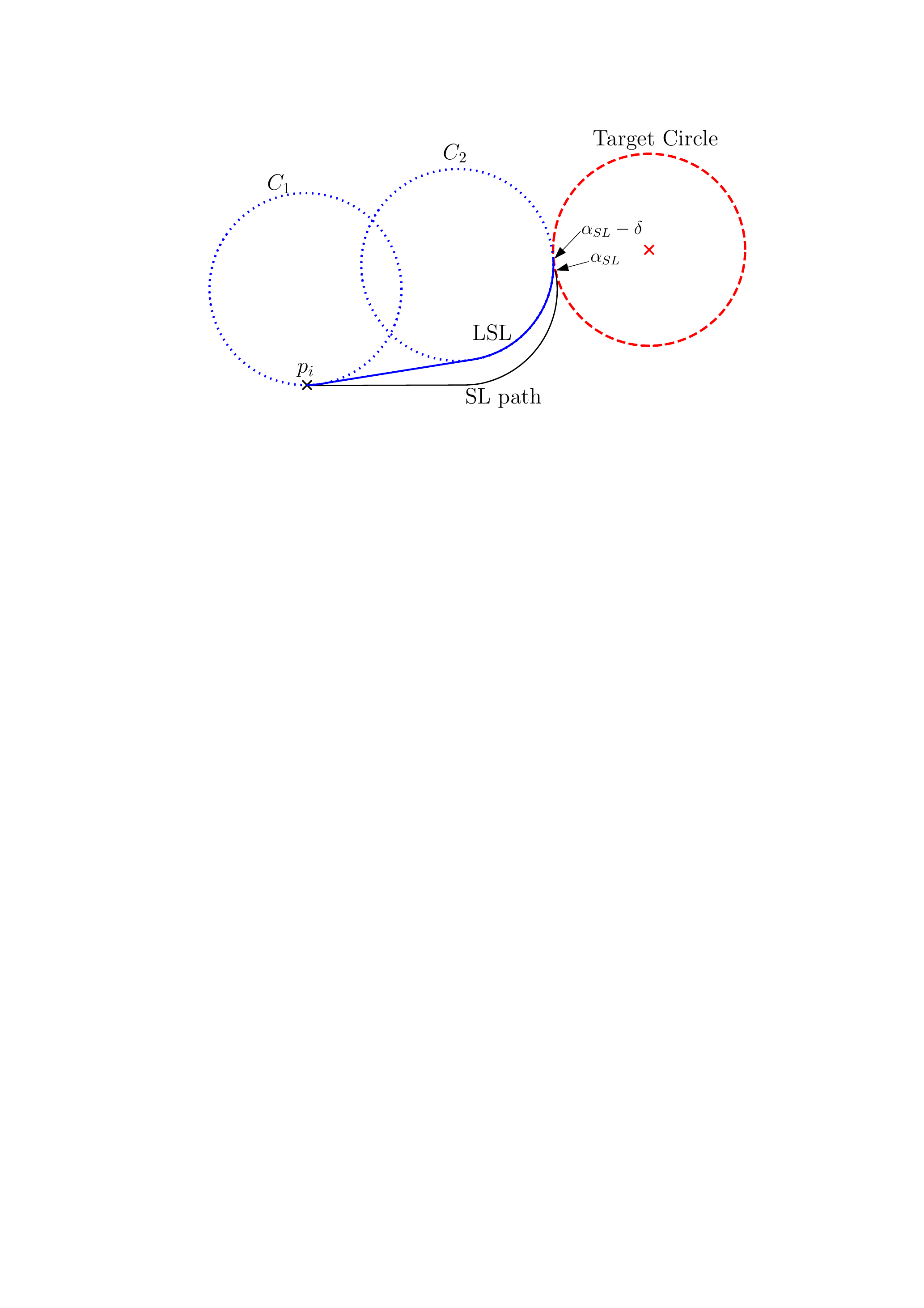} \label{fig:lem2a}}
    \subfigure[SL to RSL]{\includegraphics[width=0.6\columnwidth]{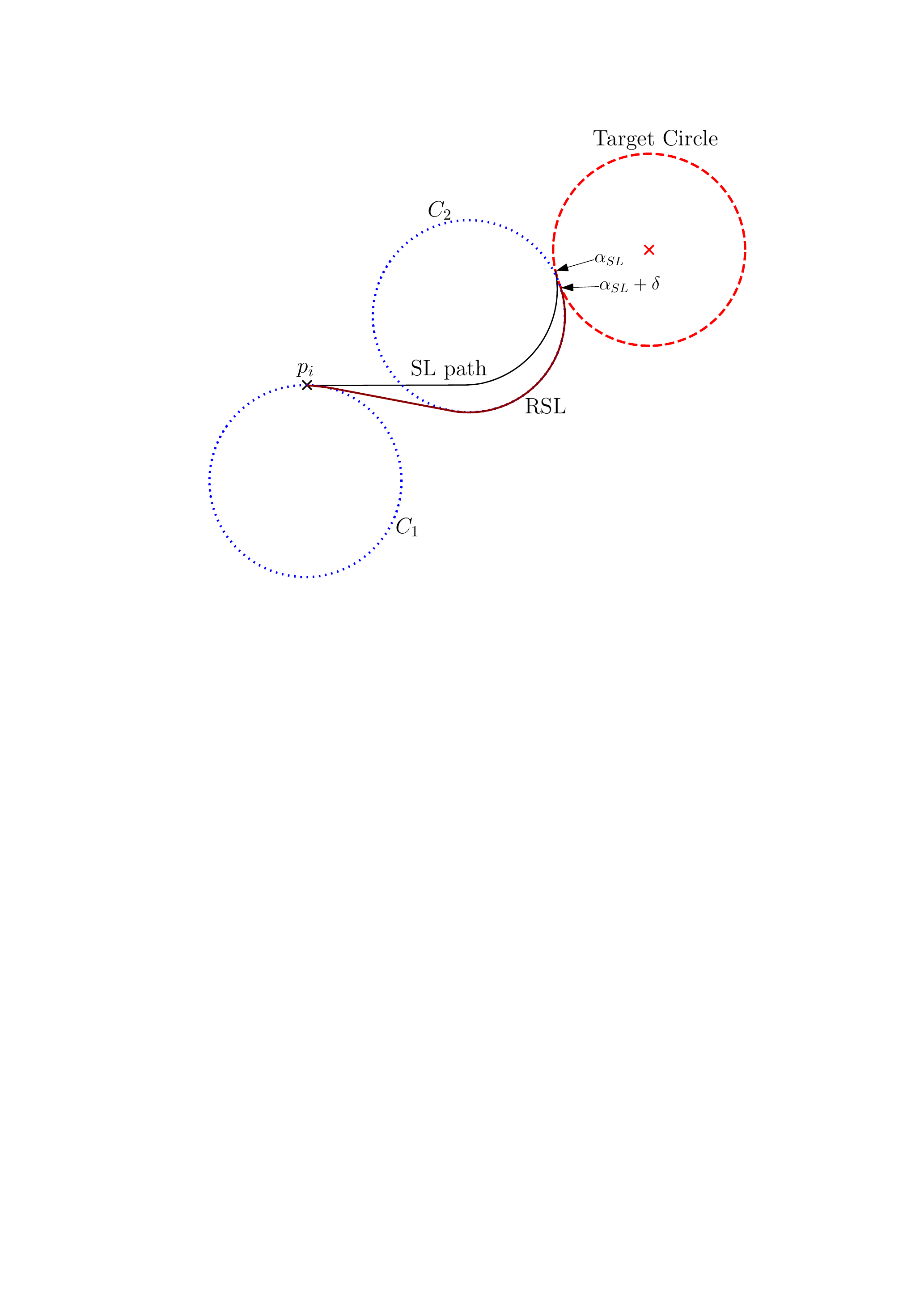}\label{fig:lem2b}}
    \caption{Perturbation of the degenerate SL path around $\alpha_{SL}$}
    \label{fig:lem2}
\end{figure}
It is straightforward to see that when the LSL and RSL paths degenerate to SL path, the lengths are equal. The position $\alpha_{SL}$, at which the two paths degenerate to SL is illustrated in Figs.~\ref{fig:lem2a} and~\ref{fig:lem2b}. The blue dotted circles $C_1$ and $C_2$ represent the first and second complete circles of the CSC paths.
If the final point is perturbed to $\alpha_{SL}-\delta$ on the target circle, we could perturb the second circle to be tangent to the target circle at $\alpha_{SL}-\delta$, and construct the LSL path as shown in Fig.~\ref{fig:lem2a}. A similar RSL path could be constructed as shown in Fig.~\ref{fig:lem2b} for a final position at $\alpha_{SL}+\delta$. Clearly, for infinitesimal $\delta$, the change in the length of the three segments in either direction is infinitesimal. Therefore, at $\alpha=\alpha_{SL}$, $D_{LSL}(\alpha)$ is left continuous, and $D_{RSL}(\alpha)$ is right continuous, and the minimum of these two lengths $\min \{D_{LSL}, D_{RSL}\}$ is continuous.
\end{proof}
The proofs for the Lemmas \ref{lem:LSLtoLSR} and \ref{lem:LSLtoRSL} completes the proof of the Theorem \ref{thm:csccont}.

\section{Optimal Interception Path} \label{sec:optpath}

The time of travel $T_p(\alpha)$ for the pursuer to reach a point at $\alpha$ on target circle is given by $D_{CSC}(\alpha)/v_p$, and it is continuous for $\alpha \in [0, \infty)$. Due to the $4\rho$ condition, the minimum of $T_p(\alpha)$ is strictly greater than $0$. As the target is traveling at constant speed, the time of travel $T_t(\alpha)$ for the target to reach the position $\alpha$  on the target circle is continuous. Since $T_p(\alpha)$ is bounded and $T_t(\alpha)$ is linearly increasing, there always exist $\alpha_l$ and $\alpha_u$ such that
\begin{equation}
    T_t(\alpha_l) < T_p(\alpha_l), \quad T_t(\alpha_u) > T_p(\alpha_u).
    \label{limitsfeas}
\end{equation}
Therefore, by intermediate value theorem, $T_p(\alpha) = T_t(\alpha)$ for some $\alpha^* \in (\alpha_l, \alpha_u)$. We could find $\alpha^*$ using a simple bisection algorithm. To find the solution efficiently, it is necessary to have a good limits of the search domain $\alpha_l$ and $\alpha_u$. Here, we give a systematic procedure to set the limits for $\alpha$. From \cite{Manyam2018Shortest}, we know where the minimum of the Dubins CSC path occurs, and let $\alpha_{\min} = \arg \min_{\alpha \in [0, 2\pi)} D_{CSC}(\alpha)$. Note that the minimum of this function occurs periodically at $\alpha_{\min}, \alpha_{\min}+2\pi,\ldots$. 

Initially, we set the lower limit to the initial location of the target, $\alpha_l= \alpha_i$, and the upper limit to the first position greater than $\alpha_i$ where the minimum occurs, $\alpha_u = \min_{\alpha > \alpha_i} D_{CSC}(\alpha)$. We check if the conditions in (\ref{limitsfeas}) are satisfied, and if not, we update the limits as $\alpha_l := \alpha_u$ and $\alpha_u := \alpha_u+2\pi$. We present the pseudo code of the bisection algorithm in Algorithm~\ref{alg:bisect}.

\begin{algorithm}
    \setstretch{1.1}
  \caption{Pseudo-code of the bisection algorithm}
  \label{alg:bisect}
  \begin{algorithmic}[1]
    \vspace{1ex}
    \Input $p_i$, $\theta_i$, $C_t$, $\alpha_i$, $\rho$ and $r_t$.
    \Output Angular position of the interception point on the target circle, $\alpha^*$
    \State $\alpha_l \gets \alpha_i$, $\alpha_u = \min_{\alpha > \alpha_i} D_{CSC}(\alpha)$
    \While{ $(T_t(\alpha_l) > T_p(\alpha_l))$ || $(T_t(\alpha_u) < T_p(\alpha_u))$}
    \State $\alpha_l \gets \alpha_u$
    \State $\alpha_u \gets \alpha_u+2\pi$
    \EndWhile
    
    \State $\delta t \gets \infty $ 
    \While{$\delta t > \epsilon$}
    \State $\alpha^* \gets (\alpha_l + \alpha_u)/2$    
    \State $\delta t \gets T_p(\alpha^*) - T_t(\alpha^*) $
    
    \If{$\delta t > 0$}
    \State $\alpha_l \gets \alpha^*$
    \Else
    \State $\alpha_u \gets \alpha^*$
    \EndIf
    \EndWhile
    
  \end{algorithmic}
\end{algorithm}

\begin{prop}
The interception point found by Algorithm~\ref{alg:bisect} is optimal, and therefore the optimal interception path is a Dubins path.
\end{prop}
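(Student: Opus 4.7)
The plan is to recast the ITOCP as a constrained minimization over the interception angle $\alpha$ and show that the angle returned by Algorithm~\ref{alg:bisect} is the minimizer. First I would observe that since the pursuer travels at constant speed $v_p$, every feasible intercept trajectory terminating at angular position $\alpha$ has length exactly $v_p\,T_t(\alpha)$ --- its travel time must equal the target's travel time $T_t(\alpha)$. Conversely, a curvature-constrained path of that length terminating at $(\alpha,\,\alpha-\pi/2)$ exists if and only if $v_p\,T_t(\alpha) \geq D_{CSC}(\alpha)$, equivalently $T_t(\alpha) \geq T_p(\alpha)$: the lower bound is the shortest Dubins length (CSC by the $4\rho$ assumption), and any larger length is realizable by enlarging the turn radii or appending a full loop.

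Thus the ITOCP reduces to minimizing $v_p\,T_t(\alpha)$ over $\mathcal{F}=\{\alpha\geq\alpha_i : T_t(\alpha)\geq T_p(\alpha)\}$. Because $T_t$ is affine and strictly increasing in $\alpha$, this is equivalent to minimizing $\alpha$ over $\mathcal{F}$. The $4\rho$ condition ensures $T_p(\alpha_i)>0=T_t(\alpha_i)$, so $\alpha_i\notin\mathcal{F}$; Theorem~\ref{thm:csccont} makes $T_p$ continuous, while its boundedness combined with the linear growth of $T_t$ makes $\mathcal{F}$ nonempty and closed. Hence $\alpha^{*} := \min\mathcal{F}$ exists, and by the intermediate value theorem $T_p(\alpha^{*})=T_t(\alpha^{*})$.

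Next I would verify that Algorithm~\ref{alg:bisect} converges to this $\alpha^{*}$. The outer while-loop maintains $T_t(\alpha_l)<T_p(\alpha_l)$ and, on exit, $T_t(\alpha_u)>T_p(\alpha_u)$; each $2\pi$ extension leaves $T_p$ at the anchors unchanged (periodicity of $D_{CSC}$) while increasing $T_t$, so the loop terminates in finitely many steps. The initialization $\alpha_u := \arg\min_{\alpha>\alpha_i} D_{CSC}(\alpha)$ pins $\alpha_u$ at a global minimum of $T_p$; whenever the exit condition holds at the first iterate, $T_t$ dominates $T_p$ at the minimum of $T_p$, and since $T_p$ never drops below this value inside the bracket, the first crossing of $T_p$ and $T_t$ lies in $[\alpha_l,\alpha_u]$ and the standard sign-based bisection converges to it. In the extended iterates, the analogous reasoning applies after the $2\pi$ shift. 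Finally, because $v_p\,T_t(\alpha^{*})=v_p\,T_p(\alpha^{*})=D_{CSC}(\alpha^{*})$, the optimal intercept path attains the Dubins lower bound and is therefore a Dubins CSC path.

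The main obstacle will be the third step: justifying rigorously that no earlier crossing can be hidden inside $[\alpha_i,\alpha_u]$ which the bisection could skip over. This requires invoking the structural characterization of $D_{CSC}$ from~\cite{Manyam2018Shortest} to rule out spurious oscillations of $T_p-T_t$ in the initial bracket --- or, failing that, strengthening the algorithm to a leftmost-sign-change search so that optimality of the returned $\alpha^{*}$ is preserved even under more general shapes of $D_{CSC}$.
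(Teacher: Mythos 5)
Your argument is correct and reaches the same conclusion, but it is organized differently from the paper's proof, which is a short contradiction argument: the paper assumes an optimal non-Dubins intercept at $\hat{\alpha}$, notes $T_p(\hat{\alpha}) < \hat{T}_p(\hat{\alpha}) = T_t(\hat{\alpha})$, and invokes the intermediate value theorem to produce an earlier Dubins interception at some $\alpha^* < \hat{\alpha}$, contradicting optimality. You instead recast the problem directly as minimizing $v_p\,T_t(\alpha)$ over the feasible set $\mathcal{F}=\{\alpha\ge\alpha_i: T_t(\alpha)\ge T_p(\alpha)\}$ and identify the optimum with $\min\mathcal{F}$. Your route makes explicit two things the paper leaves implicit: that every intercept path ending at $\alpha$ has length exactly $v_p\,T_t(\alpha)$ (so the objective is strictly increasing in $\alpha$ and ``earlier interception'' really does mean ``shorter path''), and that at the first crossing the required length coincides with the Dubins lower bound, so the optimal path \emph{is} the Dubins path. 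One caution: your ``if'' direction of feasibility --- that any length above $D_{CSC}(\alpha)$ is realizable by a curvature-bounded path --- needs its own justification (appending a loop only adds discrete increments of $2\pi\rho$), but it is not actually load-bearing: optimality follows from the lower-bound direction alone together with the fact that the Dubins path itself is feasible at $\min\mathcal{F}$, so you could drop that claim. Finally, the obstacle you flag --- that sign-based bisection converges to \emph{a} crossing in the bracket, not necessarily the leftmost one if $T_p-T_t$ changes sign more than once in $[\alpha_i,\alpha_u]$ --- is a genuine issue that the paper's own proof does not address either; your suggestion of a leftmost-sign-change search (or an argument from the structure of $D_{CSC}$ in \cite{Manyam2018Shortest} ruling out multiple crossings in the initial bracket) is exactly what would be needed to make the claim about Algorithm~\ref{alg:bisect} airtight.
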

\begin{proof}
We will prove this proposition using contradiction. Let us assume there exists an optimal non-Dubins path for the pursuer, that intercepts the target at $\hat{\alpha}$; let its distance be $\hat{D}(\hat{\alpha})$, and the corresponding travel time be $\hat{T}_p(\hat{\alpha})$. By the definition of Dubins path, $D_{CSC}(\hat{\alpha}) < \hat{D}(\hat{\alpha})$. Also the travel time $T_p(\hat{\alpha})$ of the pursuer along the Dubins path is less than the assumed path, \textit{i.e.} $T_p(\hat{\alpha}) < \hat{T}_p(\hat{\alpha})$, and this implies $T_p(\hat{\alpha}) < T_t(\hat{\alpha})$. There always exist an $\alpha_l$ such that $T_t(\alpha_l) < T_p(\alpha_l)$. By intermediate value theorem $T_t(\alpha^*) = T_p(\alpha^*)$ for some $\alpha^* \in (\alpha_l, \hat{\alpha} )$. 
This implies that this new path intercepts the target at a position $\alpha^*$, and $\alpha^* < \hat{\alpha}$.  Therefore, the assumption is incorrect, and this completes the proof.
\end{proof}

We test the algorithm to find the optimal solution to the ITOCP using a simulated scenario. The initial position of the pursuer is at the origin, and its initial heading is $0$ degrees with respect to positive $x$-axis. The center of target circle is at $(-4, 3)$, and the initial target's location is at $\alpha_i = \pi$. The pursuer and evader's speeds are $1$ m/sec and $1.2$ m/sec. The solution to the ITOCP is computed by the Algorithm~\ref{alg:bisect}, and the results are shown in Fig.~\ref{fig:results}. The travel times for the pursuer and the target are shown in Fig.~\ref{fig:resultsa} and the plot of the path of the pursuer between $p_i$ and the interception point is shown in Fig.~\ref{fig:resultsb}.

\begin{figure}
    \centering
    \subfigure[Travel times of the target and the pursuer]{\includegraphics[width=0.75\columnwidth]{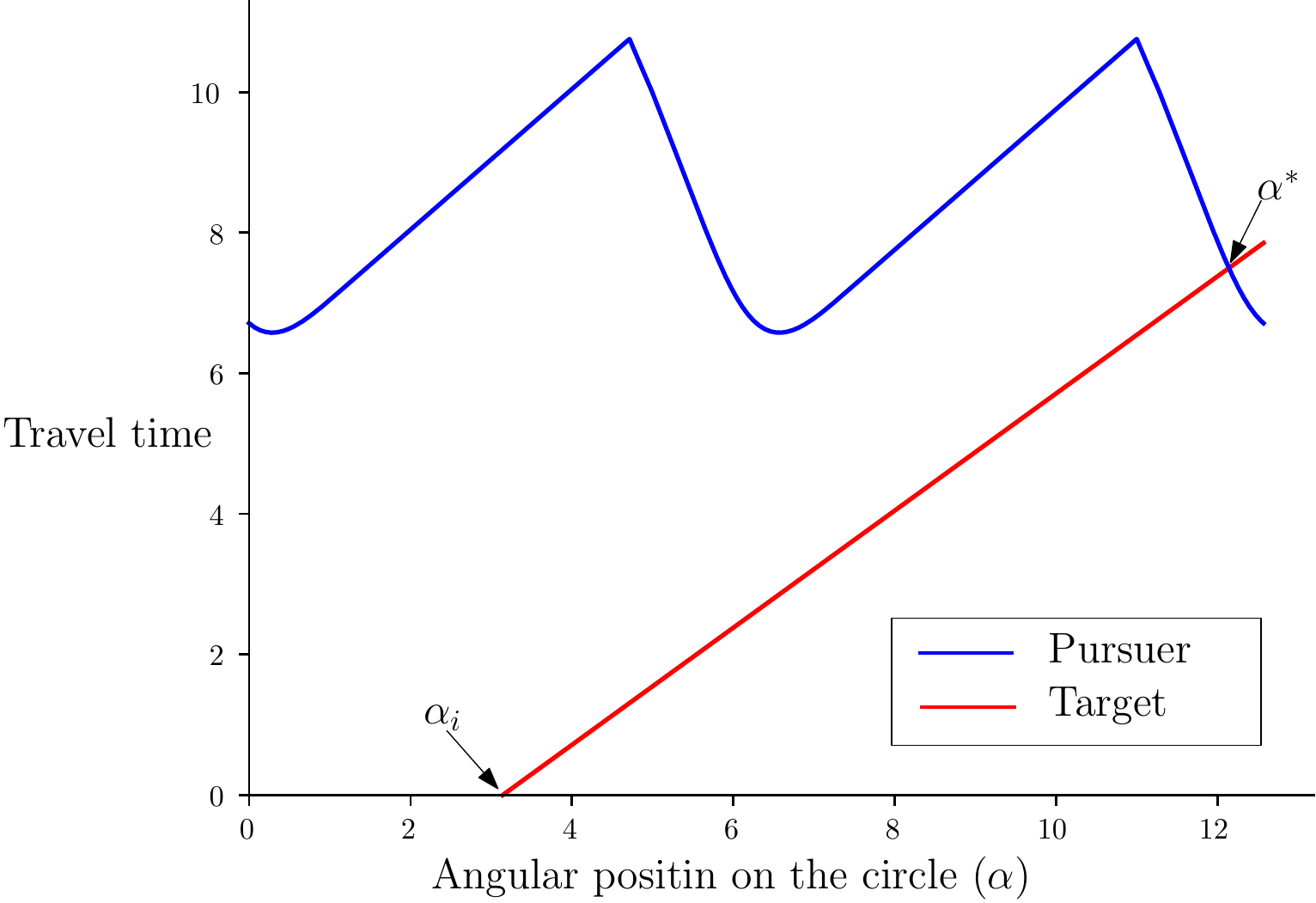} \label{fig:resultsa}}
    \subfigure[Path of the pursuer intercepting the target]{\includegraphics[width=0.75\columnwidth]{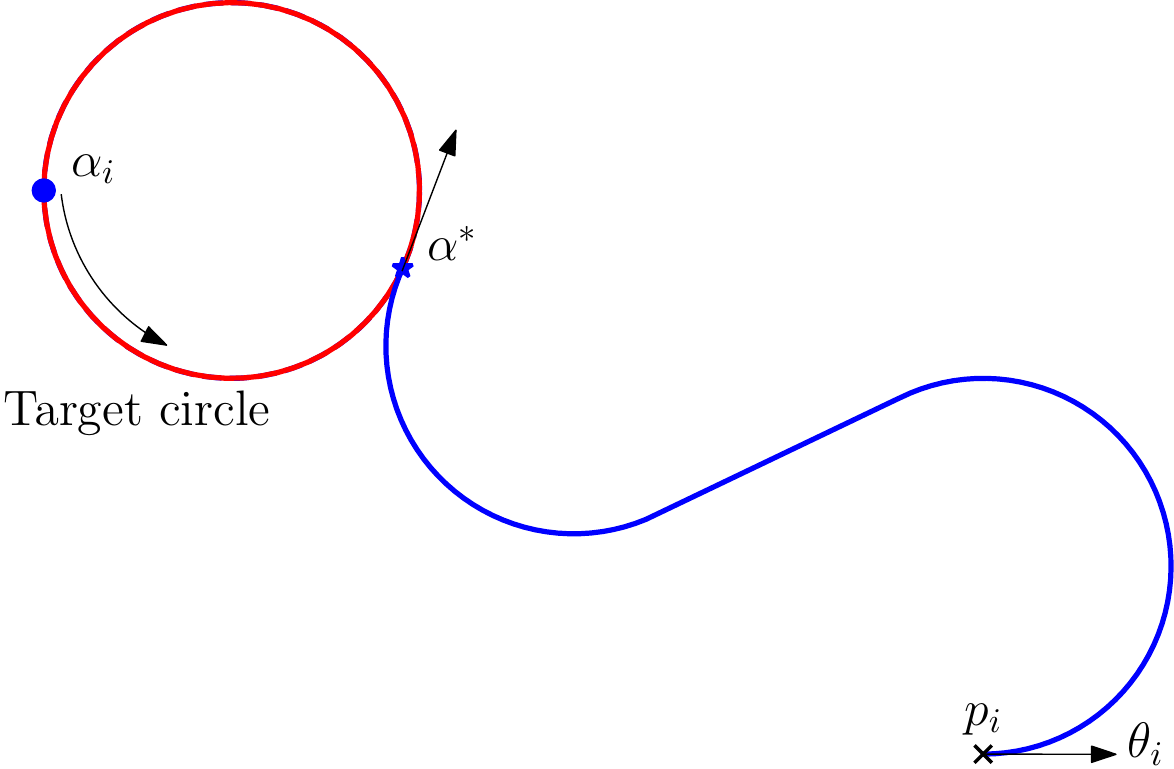}\label{fig:resultsb}}
    \caption{An interception path for a test scenario}
    \label{fig:results}
\end{figure}

\section{CONCLUSIONS} \label{sec:concl}
We proposed a target intercept problem where the pursuer's path must satisfy the minimum turn radius constraints, and the target is traveling on a circle. Under the $4\rho$ condition, we proved that the Dubins CSC path is a continuous and bounded with respect to the final angular position on the target circle. Also we proved that the optimal path to intercept is a Dubins path. We presented an iterative algorithm to find the interception point on the target circle and the corresponding path. The algorithm presented was evaluated using an example test scenario.

\bibliographystyle{ieeetr}
\bibliography{CircleIntercept}

\section*{Appendix} \label{sec:app}
Here we present the analytical proof of Lemma \ref{lem:LSLtoRSL}.
\begin{proof}
It is straight forward that the length of the LSL and RSL paths are equal when they degenerate to SL. Let the final position on the target circle be $\alpha_{SL}$ when these paths degenerate to SL. The length of the first arc of LSL path, $\phi_1^{LSL}$ is given in eq. (\ref{eq:phi1LSL}). Let us define the term inside the modulus function in eq. (\ref{eq:phi1LSL}) as $f(\alpha) := \arctan \left( \frac{c_y+2\rho \sin \alpha-\rho}{c_x+2\rho \cos \alpha} \right)$, and when the first arc of the LSL path disappears, $f(\alpha_{SL}) = 0$. 
To prove that $\phi_1^{LSL}$ is left continuous at $\alpha_{SL}$, it is sufficient to show that $f$ is decreasing function at $\alpha=\alpha_{SL}$. We will show that the first derivative of $f$ with respect to $\alpha$ is negative at $\alpha=\alpha_{SL}$.
\begin{figure}[htpb]
\centering
\includegraphics[width=.75\columnwidth]{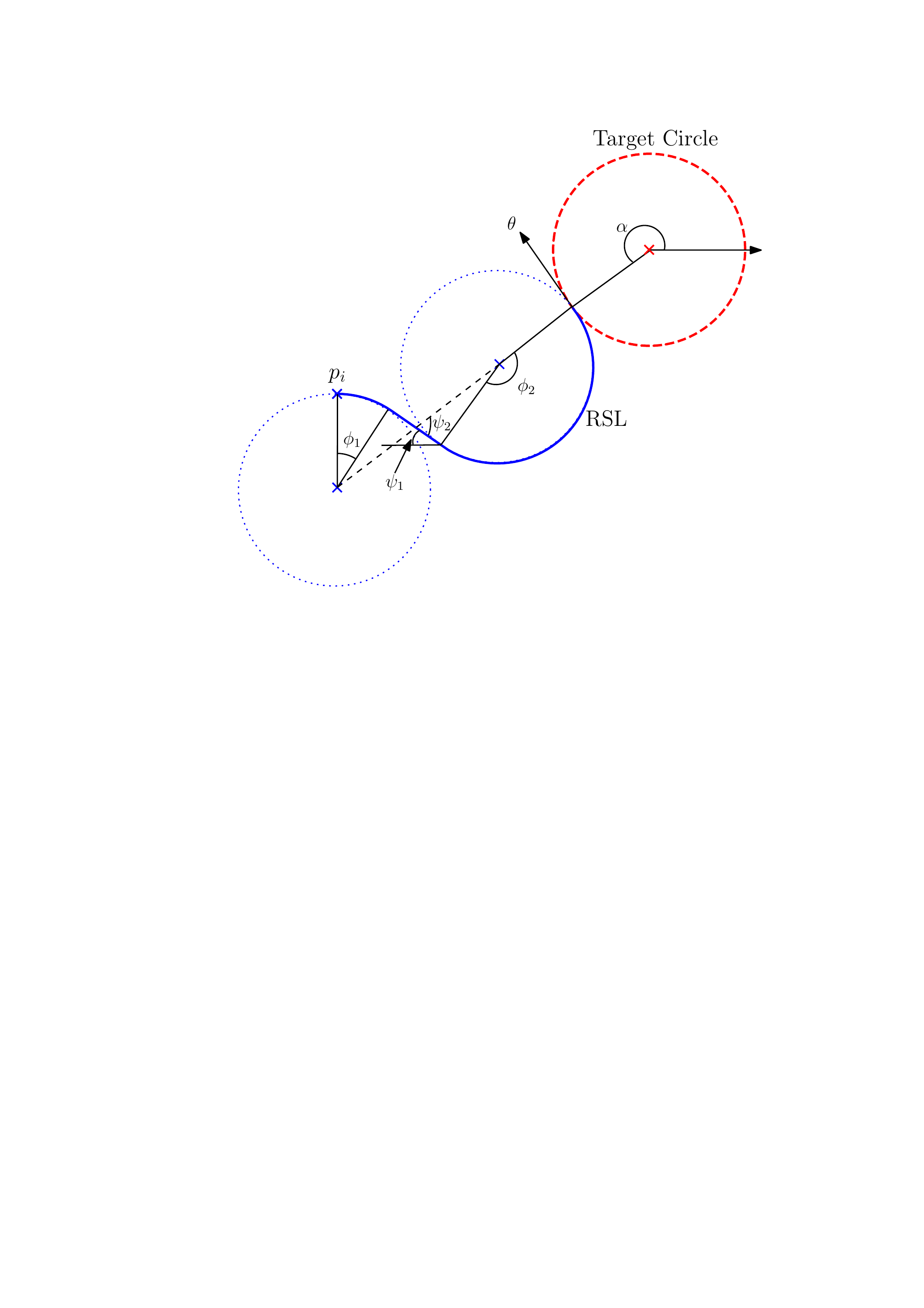}
\caption{RSL path}
\label{fig:lem2rsl} 
\end{figure}
\begin{flalign*}
 &f'(\alpha)  \\
 &\,\, =\frac{(c_x+2\rho \cos \alpha)(2\rho cos \alpha) + (c_y+2\rho \sin \alpha -\rho)(2\rho \sin \alpha)}{(c_x+2\rho \cos \alpha)^2 + (c_y+2\rho \sin \alpha -\rho)^2},   \\
 &\,\, = \frac{2\rho}{L_S^{LSL}}\left( \cos \alpha \cos \phi_1 + \sin \alpha \sin \phi_1 \right),\\
 &\,\, = -\frac{2\rho}{L_S^{LSL}}\sin (\theta-\phi_1), \\
 &\,\, f'(\alpha_{SL}) = -\frac{2\rho}{L_S^{LSL}}\sin (\theta).
\end{flalign*}
When $\phi_1^{LSL}=0$, it is clear from the Fig. \ref{fig:lem2a} that the final heading $\theta$ always lies between $0$ and $\pi$. Therefore, $f(\alpha)$ is decreasing at $\alpha=\alpha_{SL}$, and $\phi_1^{LSL}(\alpha)$ is left continuous at $\alpha_{SL}$.
    
We will prove the right continuity of $\phi_1^{RSL}$ using a similar approach. The first arc of the RSL path is given as (see Fig. \ref{fig:lem2rsl}) $$\phi_1^{RSL} = \mod \left( -\psi_1+\psi_2, 2\pi \right),$$ where $\psi_1$ and $\psi_2$ are given as 
\begin{align*}
    \psi_1 &= \arctan \left( \frac{c_y + 2\rho \sin \alpha + \rho}{c_x + 2\rho \cos \alpha} \right), \\
    \psi_2 &= \arcsin \left(\frac{2\rho}{L_{cc}^{RSL}} \right), \\ 
    L_{cc}^{RSL}&=\sqrt{ (c_x + 2\rho \cos \alpha)^2 + (c_y + 2\rho \sin \alpha + \rho)^2 }.
\end{align*}
The first derivatives of $\psi_1$ and $\psi_2$ are given as the following:
\begin{flalign*}
\psi_1' &= \frac{2\rho}{L_{cc}^{RSL}} \left(\cos \psi_1 \cos \alpha + \sin \psi_1 \sin \alpha \right), \\
&=\sin \psi_2 \cos (\psi_1-\alpha), \\
\psi_2' &= -\tan \psi_2 \sin \psi_2 \sin (\psi_1 - \alpha).
\end{flalign*}
Let $g(\alpha) = -\psi_1 + \psi_2$, and its first derivative is given as follows:
\begin{flalign*}
g'(\alpha) &= -\tan \psi_2 \cos (\alpha - (\psi_1 - \psi_2)), \\
&= \tan \psi_2 \sin (\theta - (\psi_1 - \psi_2)), \\
&= \tan \psi_2 \sin \phi_2.
\end{flalign*}
At $\alpha = \alpha_{SL}$, $\psi_2$ lies between $0$ and $\frac{\pi}{2}$, and $\phi_2$ is greater than $0$ and less than $\pi$. Therefore, $g'(\alpha) >0$ at $\alpha=\alpha_{SL}$, and hence the $\phi_1^{RSL}$ is right continuous. Therefore, $\min \{D_{RSL}(\alpha), D_{LSL}(\alpha)\}$ is continuous at $\alpha=\alpha_{SL}$.
\end{proof}

\end{document}